\def\BBox{\kern  -0.2cm\hbox{\vrule width 0.2cm height 0.2cm}}
\newtheorem{lemma}{Lemma}[section]
\newtheorem{theorem}{Theorem}[section]
\newtheorem{corollary}{Corollary}[section]
\title{A note on domination in intersecting linear systems}
\author{Adri{\' a}n V\'azquez-\'Avila  \footnotemark[1]}
\date{}
\begin{document}
\maketitle

\def\thefootnote{\fnsymbol{footnote}}
\footnotetext[1]{Subdirecci{\' o}n de Ingenier{\' i}a y Posgrado, Universidad Aeronáutica en Quer\'etaro, Parque Aeroespacial de Quer\'etaro, 76278, Quer\'etaro, M\'exico, {\tt adrian.vazquez@unaq.edu.mx}.}

\begin{abstract}
A linear system is a pair $(P,\mathcal{L})$ where
$\mathcal{L}$ is a family of subsets on a ground finite set $P$ such that $|l\cap l^\prime|\leq 1$, for every $l,l^\prime \in \mathcal{L}$. The elements of $P$ and $\mathcal{L}$ are called points and lines, respectively, and the linear system is called intersecting if any pair of lines intersect in exactly one point.
A subset $D$ of points of a linear system $(P,\mathcal{L})$ is a dominating set of $(P,\mathcal{L})$ if for every $u\in P\setminus D$ there exists $v\in D$ such that $u,v\in l$, for some $l\in\mathcal{L}$. The cardinality of a minimum dominating set of a linear system $(P,\mathcal{L})$ is called domination number of $(P,\mathcal{L})$, denoted by $\gamma(P,\mathcal{L})$. On the other hand, a subset $R$ of lines of a linear system $(P,\mathcal{L})$ is a 2-packing if any three elements of $R$ do not have a common point (are triplewise disjoint). The cardinality of a maximum 2-packing of a linear system $(P,\mathcal{L})$ is called 2-packing number of $(P,\mathcal{L})$, denoted by $\nu_2(P,\mathcal{L})$.

It is known that for intersecting linear systems $(P,\mathcal{L})$ of rank $r$ it satisfies $\gamma(P,\mathcal{L})\leq r-1$. In this note, we prove if $q$ is an even prime power and $(P,\mathcal{L})$ is an intersecting linear system of rank $q+2$ satisfying $\gamma(P,\mathcal{L})=q+1$, then this linear system can be constructed from a spanning $(q+1)$-uniform intersecting linear subsystem $(P^\prime,\mathcal{L}^\prime)$ of the projective plane of order $q$ satisfying $\tau(P^\prime,\mathcal{L}^\prime)=\nu_2(P^\prime,\mathcal{L}^\prime)-1=q+1$.

\end{abstract}

{\bf Keywords.} Linear systems, domination,transversal, 2-packing, projective plane.

{\bf Math. Subj. Class.} 05C65, 05C69

\section{Introduction}\label{sec:intro}

A \emph{set system} is a pair $(X,\mathcal{F})$ where $%
\mathcal{F}$ is a finite family of subsets on a ground set $X$. A
set system can be also thought of as a hypergraph, where the elements of $X$ and $\mathcal{F}$ are called \emph{vertices} and \emph{hyperedges}, respectively. The set system $(X,\mathcal{F})$ is \emph{intersecting} if $E\cap F\neq\emptyset$, for for every pair of distinct subsets $E,F \in \mathcal{F}$. On the other hand, the set system $(X,\mathcal{F})$ is a \emph{linear system} if it satisfies $|E\cap F|\leq 1$, for every pair of distinct subsets $E,F \in \mathcal{F}$; and it is denoted by $(P,\mathcal{L})$. The elements of $P$ and $\mathcal{L}$ are called \emph{{\tiny }points} and \emph{lines} respectively. For the remainder of this work we will only consider linear systems, and most of the following definitions can be generalized for set systems.

The \emph{rank} of a linear system is the maximum size of a line. An \emph{$r$-uniform} li\-near system is a linear system such that all lines contains exactly $r$ points. In this context, a simple graph is a 2-uniform linear system. Throughout this paper, we will only consider linear systems of rank $r\geq2$.

Let $(P,\mathcal{L})$ be a linear system and $p\in P$ be a point. The \emph{degree} of $p$ is the number of lines containing $p$, demoted by $deg(p)$, the maximum degree overall points of the linear systems is denoted by $\Delta(P,\mathcal{L})$. A point of degrees $2$ and $3$ is called \emph{double point} and \emph{triple point} respectively. A point of degree zero is called an \emph{isolated point}. Two points $p,q\in P$ are \emph{adjacent} if there is a line $l\in\mathcal{L}$ such that $p,q\in l$.

A {\emph{linear subsystem}} $(P^{\prime },\mathcal{L}^{\prime })$ of
a linear system $(P,\mathcal{L})$ satisfies that for any line $l^\prime\in\mathcal{L}^\prime$ there exists a line $l\in\mathcal{L}$
such that $l^\prime=l\cap P^\prime$. The \emph{linear subsystem induced} by a set of lines $\mathcal{L}^{\prime}\subseteq \mathcal{L}$ is the linear subsystem
$(P^{\prime },\mathcal{L}^{\prime })$ where
$P^{\prime}=\bigcup_{l\in \mathcal{L}^{\prime }} l$. The linear subsystem   $(P^\prime,\mathcal{L}^\prime)$ of $(P,\mathcal{L})$ is called \emph{spanning linear subsystem} if $P^\prime=P$. Given a linear system $(P,\mathcal{L})$, and a point $p\in P$, the linear system obtained from $(P,\mathcal{L})$ by \emph{deleting point $p$} is the linear system $(P^{\prime },\mathcal{L}^{\prime })$ induced by $\mathcal{L}^{\prime }=\{l\setminus \{p\}: l\in \mathcal{L}\}$. Given a linear system $(P,\mathcal{L})$ and a line $l\in \mathcal{L}$, the linear system obtained from $(P,\mathcal{L})$ by \emph{deleting the line $l$} is the linear system $(P^{\prime },\mathcal{L}^{\prime })$ induced by $\mathcal{L}^{\prime }=
\mathcal{L}\setminus \{l\}$. Let $(P^{\prime},\mathcal{L}^{\prime })$ and $(P,\mathcal{L})$ be two linear systems. $(P^{\prime},\mathcal{L}^{\prime })$ and $(P,\mathcal{L})$ are isomorphic, denoted by $(P^{\prime },\mathcal{L}^{\prime })\simeq(P,\mathcal{L})$, if after deleting points of degree 1 or 0 from both, the systems $(P^{\prime},\mathcal{L}^{\prime })$ and $(P,\mathcal{L})$ are isomorphic as hypergraphs, see \cite{MR3727901}.

A subset $D$ of points of a linear system $(P,\mathcal{L})$ is a \emph{dominating set} of $(P,\mathcal{L})$ if for every $u\in P\setminus D$ there exists $v\in D$ such that $u$ and $v$ are adjacent. The minimum cardinality of a dominating set of a linear system $(P,\mathcal{L})$ is called \emph{domination number}, and it is denoted by $\gamma(P,\mathcal{L})$. Domination in set systems was introduced by Acharya \cite{Acharya} and studied further in \cite{Acharya2,Arumugam,Buj,Dong,Jose}. A subset  $T$ of points of a linear system $(P,\mathcal{L})$ is a \emph{transversal} of $(P,\mathcal{L})$ (also called \emph{vertex cover} or \emph{hitting set}) if $T\cap l\neq\emptyset$, for every line $l\in\mathcal{L}$. The minimum cardinality of a transversal of a linear system $(P,\mathcal{L})$ is called \emph{transversal number}, and it is denoted by $\tau(P,\mathcal{L})$. On the other hand, a subset $R$ of lines of a linear system $(P,\mathcal{L})$ is a \emph{$2$-packing} of $(P,\mathcal{L})$ if the elements of $R$ are triplewise disjoint, that is, if three elements are chosen in $R$ then they are not incidents in a common point. The \emph{2-packing number} of $(P,\mathcal{L})$ is the maximum cardinality of a 2-packing of $(P,\mathcal{L})$ and it is  denoted by $\nu_2(P,\mathcal{L})$. Transversals and 2-packings in linear systems was studied in \cite{CGCA,CCA,CA,MR3727901}, while domination and 2-packing in simple graphs was studied in \cite{Avila}.

In \cite{Kang} it was proved that, if $(P,\mathcal{L})$ is an intersecting linear system of rank $r\geq2$, then $\gamma(P,\mathcal{L})\leq r-1$. In \cite{Shan} a characterization of set systems $(X,\mathcal{F})$ holding the equality when $r=3$ was given. On the other hand, in \cite{Dong} it was shown that all intersecting linear systems $(P,\mathcal{L})$ of rank 4 satisfying $\gamma(P,\mathcal{L})=3$ can be constructed by the Fano plane. In this note, we prove if $q$ is an even prime power and $(P,\mathcal{L})$ is an intersecting linear system of rank $(q+2)$ satisfying $\gamma(P,\mathcal{L})=q+1$, then this linear system can be constructed from a spanning $(q+1)$-uniform intersecting linear subsystem $(P^\prime,\mathcal{L}^\prime)$ of the projective plane of order $q$ satisfying $\tau(P^\prime,\mathcal{L}^\prime)=\nu_2(P^\prime,\mathcal{L}^\prime)-1=q+1$. This result generalizes the main result given in \cite{Dong}.
\section{Previous results}

Let $\mathcal{I}_r$ be the family of linear systems $(P,\mathcal{L})$ of rank $r$ with $\gamma(P,\mathcal{L})=r-1$. To better understand the main result, we need the following:
\begin{lemma}\cite{Dong}\label{lemma:primero}
For every linear system $(P,\mathcal{L})\in\mathcal{I}_r$, there exists an $r$-uniform spanning linear subsystem $(P^*,\mathcal{L}^*)$ of $(P,\mathcal{L})$ such that every line in $\mathcal{L}^*$ contains one point of degree one. 	
\end{lemma}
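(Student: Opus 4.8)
The plan is to exploit the fact that in an intersecting linear system a \emph{single} line already dominates almost everything, then to combine this with the extremality $\gamma=r-1$ to pin down the possible line-sizes, and finally to extract the subsystem by a minimality argument.

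First I would record the basic domination mechanism. If $m$ is any line and $u$ is any point of positive degree, then $u$ lies on some line $\ell$, and since the system is intersecting $\ell$ meets $m$ in a point $x$; as $u,x\in\ell$, the points $u$ and $x$ are adjacent. Hence every line is a dominating set of the set of non-isolated points, so $\gamma(P,\mathcal{L})\le |m|$ for every line $m$. Assuming (as is implicit in this setting) that there are no isolated points, this gives $\gamma(P,\mathcal{L})\le \min_{m\in\mathcal{L}}|m|$. Combined with $\gamma(P,\mathcal{L})=r-1$ and with the fact that the rank is $r$, I conclude that \emph{every} line of $\mathcal{L}$ has size $r-1$ or $r$; in particular at least one line has size exactly $r$.

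The core step is to show that the size-$r$ lines cover all of $P$, i.e.\ that every point lies on a full line. I would argue by contradiction: suppose some point $w$ lies only on lines of size $r-1$. Starting from a full line $l$ (which necessarily misses $w$) together with the short lines through $w$, the goal is to build a dominating set of size at most $r-2$, contradicting $\gamma=r-1$. The mechanism is again the ``drop a point'' analysis: for $x\in m$ the set $m\setminus\{x\}$ still dominates unless some degree-one point off $m$ has its unique line through $x$; so one tries to delete two points of $l$ while using $w$, and the short lines through it, to cover the few ``blocked'' points that the deletions would otherwise leave undominated. I expect this to be the main obstacle: one must choose the deleted points of $l$ carefully and run a short case analysis on the degrees of the blocked points, using that the lines through $w$ have size only $r-1$ to guarantee that the replacement genuinely saves a point.

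Once the full lines are known to cover $P$, the extraction is immediate. I would let $\mathcal{L}^*$ be a \emph{minimal} subfamily of the size-$r$ lines whose union is still all of $P$, and set $P^*=P$. Every line in $\mathcal{L}^*$ has size $r$, so $(P^*,\mathcal{L}^*)$ is an $r$-uniform spanning linear subsystem. By minimality, for each $l\in\mathcal{L}^*$ the family $\mathcal{L}^*\setminus\{l\}$ fails to cover some point $p_l\in l$; this $p_l$ then lies on no other line of $\mathcal{L}^*$, that is, it has degree one in $(P^*,\mathcal{L}^*)$. Hence every line of $\mathcal{L}^*$ contains a point of degree one, which completes the argument.
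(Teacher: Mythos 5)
The paper itself offers no proof of this lemma; it is imported verbatim from \cite{Dong}, so there is no in-paper argument to measure you against, and your proposal has to stand on its own. Its two bookends do stand. The opening observation is correct: in an intersecting linear system every line dominates all non-isolated points, so $\gamma(P,\mathcal{L})=r-1$ forces $|m|\in\{r-1,r\}$ for every line $m$. The closing extraction is also correct: once the size-$r$ lines are known to cover $P$, a minimal covering subfamily $\mathcal{L}^*$ of them is $r$-uniform and spanning, and minimality hands each $l\in\mathcal{L}^*$ a point lying on no other line of $\mathcal{L}^*$, i.e.\ a point of degree one in $(P^*,\mathcal{L}^*)$ (which is the reading the paper needs, since it later deletes one such point per line to obtain an $(r-1)$-uniform system).

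The problem is that everything in between — the claim that every point of $P$ lies on a line of size $r$ — is the entire content of the lemma, and you explicitly leave it as ``the main obstacle'' with only a plan of attack. As described, the plan does not even balance numerically: a full line $l$ has $r$ points, so deleting two of them already reaches the target size $r-2$ and leaves no room to adjoin $w$; deleting three and adjoining $w$ then requires $w$ to dominate every point blocked by all three deletions, and those blocked points are degree-one points whose unique lines pass through the deleted points and meet $l$ nowhere else, so two of them hanging off the same deleted point cannot in general be rescued by any single added point. Rescuing the argument requires genuinely using the hypothesis that the lines through $w$ have size $r-1$, together with a careful choice of which points of $l$ to delete, and none of that case analysis is carried out. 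So the proposal proves the easy first and last steps and leaves the theorem-bearing middle step unproven; it is a correct outline, not a proof.
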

It is denoted by $(P^*,\mathcal{L}^*)$ to be the $r$-uniform intersecting spanning linear subsystem of a linear system of $(P,\mathcal{L})\in\mathcal{I}_r$ obtained from the Lemma \ref{lemma:primero}. Further, let $(P^\prime,\mathcal{L}^\prime)$ be the $(r-1)$-uniform intersecting linear subsystem obtained from $(P^*,\mathcal{L}^*)$ by deleting the point of degree one of each line of $\mathcal{L}^*$, see \cite{Dong}.

\begin{lemma}\cite{Dong}\label{lema:igualdades}
For every linear system $(P,\mathcal{L})\in\mathcal{I}_r$ it satisfies $$\gamma(P,\mathcal{L})=\gamma(P^*,\mathcal{L}^*)=\tau(P^*,\mathcal{L}^*)=\tau(P^\prime,\mathcal{L}^\prime)=r-1.$$ 
\end{lemma}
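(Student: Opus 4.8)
The plan is to use the defining equality $\gamma(P,\mathcal{L})=r-1$, valid since $(P,\mathcal{L})\in\mathcal{I}_r$, together with two sandwich arguments, one for $\gamma(P^*,\mathcal{L}^*)$ and one for $\tau(P^*,\mathcal{L}^*)$, and finally a direct comparison of transversals between $(P^*,\mathcal{L}^*)$ and $(P^\prime,\mathcal{L}^\prime)$. Throughout I use that $(P^*,\mathcal{L}^*)$ inherits the intersecting property from $(P,\mathcal{L})$ and, being spanning and induced by its own lines, has no isolated point.

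First I would pin down $\gamma(P^*,\mathcal{L}^*)$. Since $(P^*,\mathcal{L}^*)$ is a spanning subsystem, it shares the point set $P$ but retains only the lines of $\mathcal{L}^*\subseteq\mathcal{L}$; hence any adjacency witnessed in $(P^*,\mathcal{L}^*)$ is also an adjacency in $(P,\mathcal{L})$, so every dominating set of $(P^*,\mathcal{L}^*)$ dominates $(P,\mathcal{L})$ and therefore $\gamma(P,\mathcal{L})\le\gamma(P^*,\mathcal{L}^*)$. Because $(P^*,\mathcal{L}^*)$ is intersecting of rank $r$, the bound of \cite{Kang} gives $\gamma(P^*,\mathcal{L}^*)\le r-1$. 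Combining with $\gamma(P,\mathcal{L})=r-1$ forces $\gamma(P^*,\mathcal{L}^*)=r-1$.

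Next I would treat $\tau(P^*,\mathcal{L}^*)$. As $(P^*,\mathcal{L}^*)$ has no isolated point, every transversal is a dominating set, so $\gamma(P^*,\mathcal{L}^*)\le\tau(P^*,\mathcal{L}^*)$ and thus $\tau(P^*,\mathcal{L}^*)\ge r-1$. The step I expect to be the real content is the reverse inequality, which I would extract from the degree-one structure of Lemma \ref{lemma:primero}: fixing any line $l_0\in\mathcal{L}^*$ with its degree-one point $x_0$, the set $l_0\setminus\{x_0\}$ is a transversal of size $r-1$. Indeed, every line other than $l_0$ meets $l_0$ in exactly one point by the intersecting property, and that point is different from $x_0$ since $x_0$ lies on no other line; hence all such lines are hit inside $l_0\setminus\{x_0\}$, and $l_0$ itself is hit because $r\ge2$. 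This yields $\tau(P^*,\mathcal{L}^*)\le r-1$, so $\tau(P^*,\mathcal{L}^*)=r-1$.

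Finally I would show $\tau(P^\prime,\mathcal{L}^\prime)=\tau(P^*,\mathcal{L}^*)$ by two inclusions. Since each line $l^\prime\in\mathcal{L}^\prime$ is contained in its parent line $l^*\in\mathcal{L}^*$, any transversal of $(P^\prime,\mathcal{L}^\prime)$ is automatically a transversal of $(P^*,\mathcal{L}^*)$, giving $\tau(P^*,\mathcal{L}^*)\le\tau(P^\prime,\mathcal{L}^\prime)$. For the opposite bound I would observe that a degree-one point covers only its own line and, that line being met by some other line at a point of degree at least two, can always be exchanged for such a point; hence some minimum transversal of $(P^*,\mathcal{L}^*)$ avoids all degree-one points, so it survives their deletion and remains a transversal of $(P^\prime,\mathcal{L}^\prime)$, giving $\tau(P^\prime,\mathcal{L}^\prime)\le\tau(P^*,\mathcal{L}^*)$. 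Assembling these equalities produces the full chain $\gamma(P,\mathcal{L})=\gamma(P^*,\mathcal{L}^*)=\tau(P^*,\mathcal{L}^*)=\tau(P^\prime,\mathcal{L}^\prime)=r-1$; the only genuinely nonroutine ingredient is the explicit transversal $l_0\setminus\{x_0\}$ together with the degree-one exchange, everything else being monotonicity under passing to a subsystem and the known domination bound.
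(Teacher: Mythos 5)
Your proof is correct. Note that the paper does not actually prove this lemma --- it is imported from \cite{Dong} with only a citation --- so there is no internal argument to compare against; judged on its own, your chain of reasoning is sound and self-contained. The ingredients you isolate are the natural ones: monotonicity of domination when passing to a spanning subsystem (fewer lines, same points, so $\gamma(P,\mathcal{L})\le\gamma(P^*,\mathcal{L}^*)$), the inequality $\gamma\le\tau$ in the absence of isolated points, the explicit transversal $l_0\setminus\{x_0\}$ supplied by the degree-one point of Lemma \ref{lemma:primero}, and the degree-one exchange for the final comparison. Two small streamlinings are available. First, the appeal to \cite{Kang} is dispensable: the chain $r-1=\gamma(P,\mathcal{L})\le\gamma(P^*,\mathcal{L}^*)\le\tau(P^*,\mathcal{L}^*)\le|l_0\setminus\{x_0\}|=r-1$ already forces both middle equalities. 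Second, the exchange argument can be avoided: any two lines of $\mathcal{L}^*$ meet in a point of degree at least two, hence in a point surviving the deletion, so $(P^\prime,\mathcal{L}^\prime)$ is itself intersecting and the single line $l_0\setminus\{x_0\}\in\mathcal{L}^\prime$ is already a transversal of $(P^\prime,\mathcal{L}^\prime)$ of size $r-1$; together with your observation that every transversal of $(P^\prime,\mathcal{L}^\prime)$ is one of $(P^*,\mathcal{L}^*)$, this gives $\tau(P^\prime,\mathcal{L}^\prime)=r-1$ directly. The only caveat worth recording is the degenerate case $|\mathcal{L}^*|=1$ (which can occur only for $r=2$), where the phrase ``that line being met by some other line'' has no content; the conclusion still holds trivially there.
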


\begin{lemma}\cite{Dong}\label{lemma:gradomaximo}
Let $(P,\mathcal{L})\in\mathcal{I}_r$ $(r\geq3)$ then every line of $(P^\prime,\mathcal{L}^\prime)$ has at most one point of degree 2 and $\Delta(P^\prime,\mathcal{L}^\prime)=r-1$.  	
\end{lemma}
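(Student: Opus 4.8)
The plan is to work entirely inside the $(r-1)$-uniform intersecting linear system $(P^\prime,\mathcal{L}^\prime)$, using only that it is intersecting, $(r-1)$-uniform, and satisfies $\tau(P^\prime,\mathcal{L}^\prime)=r-1$ by Lemma~\ref{lema:igualdades}. A first reduction I would record is that every point of $(P^\prime,\mathcal{L}^\prime)$ has degree at least $2$: if some point $p$ had degree $1$, lying on a unique line $l$, then $l\setminus\{p\}$ would still meet every other line (each such line meets $l$ at a point $\neq p$, since $p$ lies on no other line) and would meet $l$ itself, producing a transversal of size $r-2$ and contradicting Lemma~\ref{lema:igualdades}.

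For the first assertion I would argue by contradiction. Suppose a line $l$ carries two points $p,q$ of degree exactly $2$, and let $l_p,l_q$ be the second lines through $p$ and $q$. These are distinct (otherwise $l_p=l_q$ would meet $l$ in both $p$ and $q$), so by intersecting-ness $l_p\cap l_q=\{z\}$ for a single point $z$, and one checks $z\notin l$ (if $z\in l$ then $z\in l_p\cap l=\{p\}$ and $z\in l_q\cap l=\{q\}$, forcing $p=q$). I then claim $S=(l\setminus\{p,q\})\cup\{z\}$ is a transversal: any line meeting $l$ at a point other than $p,q$ is hit by $l\setminus\{p,q\}$, while the only lines through $p$ (resp. $q$) are $l$ and $l_p$ (resp. $l$ and $l_q$), all covered by $l\setminus\{p,q\}$ together with $z\in l_p\cap l_q$. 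Since $|S|=r-2$, this contradicts Lemma~\ref{lema:igualdades}. This argument needs $l\setminus\{p,q\}\neq\emptyset$, i.e. $r\geq 4$; the degenerate case $r=3$, where $(P^\prime,\mathcal{L}^\prime)$ is a triangle, is a genuine exception and would be noted separately.

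For the bound $\Delta(P^\prime,\mathcal{L}^\prime)\leq r-1$ I would fix a point $p$ of maximum degree. Since $\tau(P^\prime,\mathcal{L}^\prime)\geq 2$, the lines do not all pass through $p$, so there is a line $l_0$ with $p\notin l_0$; each of the $\deg p$ lines through $p$ meets $l_0$, and two distinct such lines meet $l_0$ at distinct points (they share only $p\notin l_0$), whence $\deg p\leq |l_0|=r-1$.

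The remaining and hardest step is the lower bound $\Delta(P^\prime,\mathcal{L}^\prime)\geq r-1$, which I would again approach by contradiction: assuming $\Delta(P^\prime,\mathcal{L}^\prime)\leq r-2$, the goal is to build a transversal of size at most $r-2$ and contradict Lemma~\ref{lema:igualdades}. Fixing a maximum-degree point $p$ and a line $l_1\ni p$, the non-$p$ points of $l_1$ form a transversal of size $r-2$ of the family $\mathcal{M}$ of lines avoiding $p$, so that $\{p\}$ together with any transversal of $\mathcal{M}$ covers everything; the difficulty is to cover $\mathcal{M}$ with only $r-3$ points. The natural tool is the identity $\sum_{x\in l}\deg x=b+r-2$, valid for every line $l$ (each of the other $b-1$ lines meets $l$ exactly once), which forces $b\leq (r-2)^2$ when $\Delta\leq r-2$ and, via the uniform weighting $x\mapsto \deg x/(b+r-2)$, yields the fractional bound $\tau^\ast(P^\prime,\mathcal{L}^\prime)\leq r-2$. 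Promoting this fractional cover to an \emph{integral} transversal of size $r-2$ is the main obstacle; I expect it to require combining the degree identity with the first assertion so as to exhibit a reducible pair of degree-$2$ points on a common line, or else a direct structural or inductive analysis of the intersecting subsystem $\mathcal{M}$.
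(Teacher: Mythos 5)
The paper offers no proof of Lemma~\ref{lemma:gradomaximo} at all --- it is quoted from \cite{Dong} --- so your proposal can only be judged on its own terms, and on those terms it is incomplete. Two of your three steps are correct. The argument that a line $l$ carrying two degree-$2$ points $p,q$ yields the transversal $(l\setminus\{p,q\})\cup\{z\}$ of size $r-2$, contradicting $\tau(P^\prime,\mathcal{L}^\prime)=r-1$ from Lemma~\ref{lema:igualdades}, is sound for $r\geq4$; and your observation that the clause fails for $r=3$, where $(P^\prime,\mathcal{L}^\prime)$ is forced to be a triangle each of whose lines has two degree-$2$ points, is a legitimate correction to the statement as quoted. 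The upper bound $\deg p\leq|l_0|=r-1$, obtained from a line $l_0$ avoiding a maximum-degree point, is also correct.

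The genuine gap is the lower bound $\Delta(P^\prime,\mathcal{L}^\prime)\geq r-1$, which you explicitly leave open, and which is the substantive half of the lemma (it is the part Lemma~\ref{lemma:igualdad} actually relies on). Your fractional relaxation cannot be repaired for free: the identity $\sum_{x\in l}\deg x=|\mathcal{L}^\prime|+r-2$ does give $\tau^{\ast}\leq r-2$ under the assumption $\Delta\leq r-2$, but for exactly this class of systems the integrality gap is genuinely of order $1$ (the projective plane of order $n$ has $\tau^{\ast}=n+\tfrac{1}{n+1}$ while $\tau=n+1$), so $\tau^{\ast}\leq r-2$ is a priori fully consistent with $\tau=r-1$ and no rounding argument closes the distance. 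Your fallback hope of exhibiting a line with two degree-$2$ points under the hypothesis $\Delta\leq r-2$ is also not supported by the counting you set up: a system in which every line has at most one degree-$2$ point and all other degrees lie in $\{3,\dots,r-2\}$ satisfies all your inequalities, so no contradiction is forthcoming from degrees alone. What is missing is a concrete construction of an integral transversal of size $r-2$ (equivalently, of a dominating set of $(P,\mathcal{L})$ of size $r-2$); note that the available hypothesis is stronger than $\tau(P^\prime,\mathcal{L}^\prime)=r-1$, namely $\gamma(P,\mathcal{L})=r-1$ together with the degree-one point on each line of $(P^{*},\mathcal{L}^{*})$, and the argument in \cite{Dong} exploits this domination structure rather than the transversal number of the derived system alone.
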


\begin{lemma}\cite{Dong}\label{lemma:desigualdadesplanoproy}
Let $(P,\mathcal{L})\in\mathcal{I}_r$ $(r\geq3)$ then
\begin{center}
 $3(r-2)\leq|\mathcal{L}^\prime|\leq(r-1)^2-(r-1)+1$ and $|P^\prime|=(r-1)^2-(r-1)+1$,	
\end{center}
and so $\gamma(P^\prime,\mathcal{L}^\prime)=1$.
\end{lemma}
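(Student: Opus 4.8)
The plan is to work throughout with the $(r-1)$-uniform intersecting linear subsystem $(P',\mathcal L')$, writing $n=r-1\ge 2$, and to use the two facts already available for it: $\Delta(P',\mathcal L')=n$ by Lemma~\ref{lemma:gradomaximo}, and $\tau(P',\mathcal L')=n$ by Lemma~\ref{lema:igualdades}.

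First I would fix a point $p$ of maximum degree, so $\deg(p)=n$, and let $l_1,\dots,l_n$ be the lines through $p$. Since the system is linear these lines pairwise meet only at $p$, so the sets $l_i\setminus\{p\}$ are pairwise disjoint and each has $n-1$ points. The key observation is that every line $m$ not passing through $p$ meets each $l_i$ in exactly one point, and these $n$ points are pairwise distinct (a common point of $m\cap l_i$ and $m\cap l_j$ would be a second point of $l_i\cap l_j$); as $m$ has exactly $n$ points, it is contained in $\bigcup_i l_i$. Hence $P'=\{p\}\cup\bigcup_{i=1}^n\big(l_i\setminus\{p\}\big)$, a disjoint union, which gives $|P'|=1+n(n-1)=(r-1)^2-(r-1)+1$. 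The same observation shows every point of $P'\setminus\{p\}$ is adjacent to $p$, so $\{p\}$ is a dominating set and $\gamma(P',\mathcal L')=1$. For the upper bound I would then count incidences: since $(P',\mathcal L')$ is $n$-uniform with $\Delta(P',\mathcal L')=n$,
$$n\,|\mathcal L'|=\sum_{l\in\mathcal L'}|l|=\sum_{x\in P'}\deg(x)\le n\,|P'|,$$
whence $|\mathcal L'|\le |P'|=(r-1)^2-(r-1)+1$.

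The lower bound is the part I expect to be the real obstacle. My plan is to delete the maximum-degree point $p$ and look at $\mathcal M=\{l\in\mathcal L':p\notin l\}$, the set of lines missing $p$, so that $|\mathcal L'|=n+|\mathcal M|$. First I would check that $\tau(\mathcal M)\ge n-1$: if $T$ is a minimum transversal of $\mathcal M$, then $T\cup\{p\}$ is a transversal of $(P',\mathcal L')$, so $n=\tau(P',\mathcal L')\le\tau(\mathcal M)+1$. The heart of the matter is then the auxiliary inequality that every intersecting linear system $\mathcal M$ satisfies $|\mathcal M|\ge 2\tau(\mathcal M)-1$, which I would prove by induction on $|\mathcal M|$: pick any line $m_0$; since $\tau(\mathcal M)\ge 2$ not all points of $m_0$ have degree $1$ (otherwise $\mathcal M=\{m_0\}$), so choose $x\in m_0$ with $\deg_{\mathcal M}(x)\ge 2$ and pass to $\mathcal M'=\{l\in\mathcal M:x\notin l\}$. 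Here $\tau(\mathcal M')\ge\tau(\mathcal M)-1$ (adjoin $x$ to a transversal of $\mathcal M'$) while $|\mathcal M'|=|\mathcal M|-\deg_{\mathcal M}(x)\le|\mathcal M|-2$, so the induction hypothesis closes the estimate. Combining these, $|\mathcal M|\ge 2(n-1)-1=2n-3$, and therefore $|\mathcal L'|=n+|\mathcal M|\ge 3n-3=3(r-2)$.

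The delicate point to get right is precisely the step that makes the induction close: removing the point $x$ drops the line count by at least two while dropping the transversal number by at most one. I would double-check the degenerate cases ($\mathcal M'=\emptyset$, or $\tau(\mathcal M)=1$) separately, and confirm that the reduction $\tau(\mathcal M)\ge n-1$ does not lose a unit. The tightness of the auxiliary bound — it is attained by the intersecting linear system whose points and lines are the edges and vertices of an odd complete graph — is reassuring, and also signals that no slicker incidence-counting argument will suffice, since pure degree counts tend to give bounds in the wrong direction; the structural use of $\tau(\mathcal M)\ge n-1$ seems essential.
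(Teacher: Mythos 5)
Your proposal is correct, but note that the paper itself offers nothing to compare it against: Lemma~\ref{lemma:desigualdadesplanoproy} is imported from \cite{Dong} and stated without proof, so your argument has to stand on its own --- and it does. The first half is the standard count: taking $p$ with $\deg(p)=\Delta(P',\mathcal L')=r-1$ (Lemma~\ref{lemma:gradomaximo}) and observing that every line missing $p$ meets each of the $r-1$ lines through $p$ in distinct points and hence, being $(r-1)$-uniform, is contained in their union, correctly yields $|P'|=1+(r-1)(r-2)$, the domination statement, and, via $\sum_x\deg(x)\le (r-1)|P'|$, the upper bound on $|\mathcal L'|$. The genuinely non-trivial contribution is the lower bound, and your two-step reduction is sound: $\tau(\mathcal M)\ge \tau(P',\mathcal L')-1=r-2$ for the family $\mathcal M$ of lines avoiding $p$ (using $\tau(P',\mathcal L')=r-1$ from Lemma~\ref{lema:igualdades}), together with the auxiliary inequality $|\mathcal M|\ge 2\tau(\mathcal M)-1$ for intersecting linear systems. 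I checked the induction closing that inequality: when $\tau(\mathcal M)\ge 2$ there are at least two lines, any second line meets $m_0$, so a point $x\in m_0$ of degree at least $2$ exists; deleting the lines through $x$ removes at least two lines while lowering $\tau$ by at most one, and the base case $\tau\le 1$ is trivial. Summing, $|\mathcal L'|=(r-1)+|\mathcal M|\ge (r-1)+2(r-2)-1=3(r-2)$, as required. The one presentational caveat is that your proof silently relies on $(P',\mathcal L')$ being $(r-1)$-uniform and intersecting and on the values of $\Delta$ and $\tau$ from Lemmas~\ref{lema:igualdades} and~\ref{lemma:gradomaximo}; that is legitimate here since those are stated earlier, but worth flagging explicitly.
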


\begin{theorem}\cite{CGCA}\label{teo:segundopaper}
Let $(P,\mathcal{L})$ be a linear system and $p,q\in P$ be two points such that $deg(p)=\Delta(P,\mathcal{L})$ and $deg(q)=\max\{deg(x): x\in
P\setminus\{p\}\}$. If $|\mathcal{L}|\leq deg(p)+deg(q)+\nu_2(P,\mathcal{L})-3$, then $\tau(P,\mathcal{L})\leq\nu_2(P,\mathcal{L})-1$.
\end{theorem}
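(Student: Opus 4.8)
The plan is to construct, in each case, an explicit transversal of size at most $\nu_2(P,\mathcal{L})-1$, reserving a contradiction argument for the single extremal configuration. Write $\nu_2=\nu_2(P,\mathcal{L})$ and let $\epsilon=1$ if $p$ and $q$ lie on a common line $l_0$, and $\epsilon=0$ otherwise. First I would partition $\mathcal{L}$ into the lines through $p$ (there are $deg(p)$ of them), the lines through $q$ but not $p$ (there are $deg(q)-\epsilon$ of them, since in a linear system at most one line contains both $p$ and $q$), and the family $\mathcal{M}$ of lines avoiding both $p$ and $q$. Counting then gives $|\mathcal{M}|=|\mathcal{L}|-deg(p)-deg(q)+\epsilon\le\nu_2-3+\epsilon$ by the hypothesis on $|\mathcal{L}|$. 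Since $\{p,q\}$ already covers every line of the first two families, it remains only to cover $\mathcal{M}$ economically.

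The easy cases are as follows. If two lines of $\mathcal{M}$ share a point $z$, then $\{p,q,z\}$ together with one point on each of the remaining $|\mathcal{M}|-2$ lines of $\mathcal{M}$ is a transversal of size at most $|\mathcal{M}|+1\le\nu_2-1$. If the lines of $\mathcal{M}$ are pairwise disjoint but $|\mathcal{M}|\le\nu_2-3$, then $\{p,q\}$ together with one point on each line of $\mathcal{M}$ gives a transversal of size at most $|\mathcal{M}|+2\le\nu_2-1$. Thus the only obstruction is the extremal case in which $\mathcal{M}$ consists of $\nu_2-2$ pairwise disjoint lines and $\epsilon=1$ (this is forced, since $|\mathcal{M}|=\nu_2-2>\nu_2-3$ requires $\epsilon=1$), so $p$ and $q$ share the line $l_0$ and the naive transversal has size $\nu_2$.

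To break this tie I would argue by a dichotomy, noting that $deg(p)\ge2$ in this configuration (otherwise all degrees are $1$, the lines are pairwise disjoint, and the hypothesis fails). Suppose first that there exist lines $l_p\ni p$ and $l_q\ni q$, both distinct from $l_0$, with $l_p\cap l_q$ empty or not meeting any line of $\mathcal{M}$. Then $\mathcal{M}\cup\{l_0,l_p,l_q\}$ is a $2$-packing: the three added lines are distinct (if $l_p=l_q$ it would contain both $p$ and $q$, hence equal $l_0$), their pairwise intersections are $p$, $q$ and $l_p\cap l_q$, none of which lies on a line of $\mathcal{M}$, and $\mathcal{M}$ itself is pairwise disjoint, so no point meets three of these lines. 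This $2$-packing has size $\nu_2+1$, contradicting the maximality of $\nu_2$. The complementary possibility is that for every such $l_p,l_q$ the point $l_p\cap l_q$ lies on a line of $\mathcal{M}$. Fixing one line $l_p^\ast\ni p$ with $l_p^\ast\neq l_0$, the points $l_p^\ast\cap l_q$, as $l_q$ ranges over the $deg(q)-1$ lines through $q$ other than $l_0$, are distinct and lie on distinct lines of $\mathcal{M}$ (both facts follow from linearity). This gives an injection sending each such $l_q$ to a line of $\mathcal{M}$ carrying a point of $l_p^\ast\cap l_q$. Selecting that point on each line of $\mathcal{M}$ in the image, an arbitrary point on the others, and adjoining $p$, produces a transversal of size at most $1+|\mathcal{M}|=\nu_2-1$ that covers every line.

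I expect the main obstacle to be precisely this extremal case, where the straightforward covering only yields $\tau\le\nu_2$; the crux is recognizing that the adjacency of $p$ and $q$, which is exactly the source of the lost unit in the counting, can be exploited either to enlarge a maximum $2$-packing by the three nowhere‑triply‑concurrent lines $l_0,l_p,l_q$, or, when that fails, to recycle the intersection points of a single line through $p$ with the lines through $q$ as a cheap cover of $\mathcal{M}$. The remaining verifications are the disjointness bookkeeping for the $2$-packing and the injectivity of the assignment $l_q\mapsto$ (line of $\mathcal{M}$), both of which rely only on the linearity condition $|l\cap l'|\le1$.
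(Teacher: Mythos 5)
The paper never proves Theorem~\ref{teo:segundopaper}: it is imported from \cite{CGCA} as a black box, so there is no in-paper argument to compare yours against. Judged on its own, your proof is correct and complete. The decomposition of $\mathcal{L}$ into the pencils through $p$ and $q$ and the residual family $\mathcal{M}$, the count $|\mathcal{M}|\le\nu_2-3+\epsilon$, and the reduction to the single extremal configuration ($\mathcal{M}$ a pairwise disjoint family of exactly $\nu_2-2$ lines with $p$ and $q$ collinear on $l_0$) are all sound, and both horns of the closing dichotomy check out: if some pair $l_p,l_q$ has $l_p\cap l_q$ empty or off every line of $\mathcal{M}$, then $\mathcal{M}\cup\{l_0,l_p,l_q\}$ really is a $2$-packing of size $\nu_2+1$ (the only candidate triple concurrences are at $p$, $q$, or $l_p\cap l_q$, none of which lies on two lines of the pairwise disjoint $\mathcal{M}$), contradicting maximality; otherwise the assignment $l_q\mapsto$ (the line of $\mathcal{M}$ through $l_p^\ast\cap l_q$) is well defined and injective by linearity, and $\{p\}$ together with the chosen representatives of $\mathcal{M}$ is a transversal of size $\nu_2-1$. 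Two small points worth making explicit in a final write-up: you need $deg(p)\ge 2$ only to guarantee that $l_p^\ast$ exists, and your one-line justification of it is fine; and when $deg(q)=1$ the second branch holds vacuously and the construction degenerates gracefully to $\{p\}$ plus one point per line of $\mathcal{M}$, which still covers everything because $l_0$ is then the only line through $q$. Neither is a gap.
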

\section{Main result}

In this section we prove, if $q$ is an even prime power and $(P,\mathcal{L})$ is an intersecting linear system of rank $(q+2)$ satisfying $\gamma(P,\mathcal{L})=q+1$, then this linear system can be constructed from a spanning $(q+1)$-uniform intersecting linear subsystem $(P^\prime,\mathcal{L}^\prime)$ of the projective plane of order $q$ satisfying $\tau(P^\prime,\mathcal{L}^\prime)=\nu_2(P^\prime,\mathcal{L}^\prime)-1=q+1$. 

Recall that, a \emph{finite projective plane} (or merely \emph{projective plane}) is an uniform linear system satisfying that any pair of points have a common line, any pair of lines have a common point and there exist four points in general position (there are not three collinear points). It is well known that if $(P,\mathcal{L})$ is a projective plane then there exists a number $q\in\mathbb{N}$, called \emph{order of projective plane}, such that every point (line, respectively) of $(P,\mathcal{L})$ is incident to exactly $q+1$ lines (points, respectively), and $(P,\mathcal{L})$ contains exactly $q^2+q+1$ points (lines, respectively). In addition to this, it is well known that projective planes of order $q$, denoted by $\Pi_q$, exist when $q$ is a prime power. For more information about the existence and the unicity of projective planes see, for instance, \cite{B86,B95}. In \cite{MR3727901} it was proved, if $q$ is an even prime power then $\tau(\Pi_q)=\nu_2(\Pi_q)-1=q+1$, however, if $q$ is an odd prime power then $\tau(\Pi_q)=\nu_2(\Pi_q)=q+1$.

To prove the following lemma (Lemma \ref{lemma:igualdad}), we need a result shown in \cite{Avila2}. Therefore, we present the proof given in \cite{Avila2}.

	\begin{lemma}\cite{Avila2}\label{lemma:impar}
		Let $(P,\mathcal{L})$ be an $r$-uniform intersecting linear system with $r\geq2$ be an even integer. If $\nu_2(P,\mathcal{L})=r+1$ then $\tau(P,\mathcal{L})=\frac{r+2}{2}$. 
\end{lemma}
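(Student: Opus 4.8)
The plan is to exploit the fact that $\nu_2=r+1$ is the largest value the $2$-packing number of an $r$-uniform intersecting linear system can attain, which forces an extremely rigid structure. First I would observe that in any $r$-uniform intersecting linear system a $2$-packing $R$ has pairwise distinct intersection points (if two of the pairs shared a point, that point would lie on three lines of $R$), so fixing one line $l\in R$, the remaining lines meet $l$ in $|R|-1$ distinct points among the $r$ points of $l$; hence $|R|\le r+1$. When $\nu_2=r+1$, choosing a maximum $2$-packing $R=\{l_1,\dots,l_{r+1}\}$ makes this tight: each $l_i$ meets the other $r$ lines in exactly $r$ distinct points, so \emph{every} point of $l_i$ is an intersection point $l_i\cap l_j$. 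Thus $R$ is, up to the deletion convention in the definition of isomorphism, the dual of the complete graph $K_{r+1}$: its points are the $\binom{r+1}{2}$ edges of $K_{r+1}$, each carried by exactly two lines, and its lines are the vertices.

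For the lower bound I would argue by double counting: any transversal $T$ must meet all $r+1$ lines of $R$, and since $R$ is a $2$-packing each point of $P$ lies on at most two lines of $R$. Therefore $|T|\ge\lceil(r+1)/2\rceil$, which equals $\frac{r+2}{2}$ precisely because $r$ is even (so $r+1$ is odd). This already gives $\tau(P,\mathcal{L})\ge\frac{r+2}{2}$ and uses the parity hypothesis in an essential way.

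The hard part will be the matching upper bound, for which I would first show that $R$ is \emph{all} of $\mathcal{L}$. Suppose $m\in\mathcal{L}\setminus R$. Since the system is intersecting, $m$ meets each $l_i$ in a single point $x_i$, and by the first paragraph $x_i\in l_i$ forces $x_i=l_i\cap l_{\phi(i)}$ for a unique $\phi(i)\neq i$. But $x_i$ also lies on $l_{\phi(i)}$ and on $m$, so $x_i=m\cap l_{\phi(i)}=x_{\phi(i)}$, and since $x_i$ lies on exactly two lines of $R$ we get $\phi(\phi(i))=i$. Hence $\phi$ is a fixed-point-free involution on the $r+1$ indices, i.e.\ a perfect matching of $K_{r+1}$ --- impossible because $r+1$ is odd. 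So no such $m$ exists and $\mathcal{L}=R$.

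Finally, with $(P,\mathcal{L})$ isomorphic to the dual of $K_{r+1}$, a transversal is exactly an edge cover of $K_{r+1}$, and by Gallai's identity the minimum edge cover of a graph on $n$ vertices with no isolated vertex has size $n$ minus the maximum matching number. For $K_{r+1}$ this is $(r+1)-\frac{r}{2}=\frac{r+2}{2}$, matching the lower bound and yielding $\tau(P,\mathcal{L})=\frac{r+2}{2}$. I expect the only real obstacle to be the involution/parity step ruling out extra lines; once $\mathcal{L}=R$ is established, the rest is the standard edge-cover computation.
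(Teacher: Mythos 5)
Your proof is correct and follows essentially the same route as the paper: both arguments hinge on the observation that every point of a line in the maximum $2$-packing $R$ is a double point of $R$, so that a hypothetical extra line would induce a fixed-point-free pairing of the $r+1$ lines of $R$ --- impossible by parity --- whence $\mathcal{L}=R$ (your involution $\phi$ is just a careful spelling-out of the paper's pigeonhole step). The only difference is in the endgame: where the paper cites an external result for $\tau(P,\mathcal{L})=\lceil(r+1)/2\rceil$, you identify $R$ with the dual of $K_{r+1}$ and compute the minimum edge cover via Gallai's identity, which makes that step self-contained.
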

\begin{proof}[\cite{Avila2}]
	Let $(P,\mathcal{L})$ be a $r$-uniform intersecting linear system with $\nu_2(P,\mathcal{L})=r+1$, where $r\geq2$ is a even integer. Let $R=\{l_1,\ldots,l_{r+1}\}$ be a maximum 2-packing of $(P,\mathcal{L})$. Since $(P,\mathcal{L})$ is an intersecting linear system then $l_i\cap l_j\neq\emptyset$, for $1\leq i<j\leq r+1$, and hence $|l_i|=r$, for $i=1,\ldots,r+1$. Let $l\in\mathcal{L}\setminus R$. Since $(P,\mathcal{L})$ is an intersecting linear system it satisfies $l\cap l_i=l\cap l_i\cap l_{j_i}\neq\emptyset$, for $i=1,\ldots,r+1$ and for some $j_i\in\{1,\ldots,r+1\}\setminus\{i\}$, however, by the pigeonhole principle there are a line $l_s\in R$ such that $l\cap l_s=\emptyset$, since there are an odd number of lines in $R$ and by linearity of $(P,\mathcal{L})$. Therefore $\mathcal{L}=R$ and $\tau(P,\mathcal{L})=\lceil \nu_{2}/2\rceil=\frac{r+2}{2}$ (see \cite{CA}).
\end{proof}

\begin{lemma}\label{lemma:igualdad}
Let $r\geq2$ be an even integer . For every $(P,\mathcal{L})\in\mathcal{I}_{r+2}$ it satisfies $\nu_2(P^\prime,\mathcal{L}^\prime)= r+2$. Hence $\tau(P^\prime,\mathcal{L}^\prime)=\nu_2(P^\prime,\mathcal{L}^\prime)-1$.	
\end{lemma}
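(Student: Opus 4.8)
The plan is to establish the two bounds $\nu_2(P^\prime,\mathcal{L}^\prime)\leq r+2$ and $\nu_2(P^\prime,\mathcal{L}^\prime)\geq r+2$ separately; the trailing identity $\tau(P^\prime,\mathcal{L}^\prime)=\nu_2(P^\prime,\mathcal{L}^\prime)-1$ is then immediate, since $\tau(P^\prime,\mathcal{L}^\prime)=r+1$ by Lemma \ref{lema:igualdades}. A preliminary reduction is convenient. By Lemma \ref{lemma:primero} every line of $(P^*,\mathcal{L}^*)$ carries its own point of degree one; such a point lies on a single line, so it never belongs to two lines, let alone three. Hence deleting these points does not alter which subfamilies of lines are triplewise disjoint, and therefore $\nu_2(P^*,\mathcal{L}^*)=\nu_2(P^\prime,\mathcal{L}^\prime)$. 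This lets me pass freely between the $(r+1)$-uniform system $(P^\prime,\mathcal{L}^\prime)$ and the $(r+2)$-uniform system $(P^*,\mathcal{L}^*)$, the latter being \emph{even}-uniform precisely because $r$ is even, which is where I expect the hypothesis on $r$ to be spent.

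For the upper bound I would argue directly on $(P^\prime,\mathcal{L}^\prime)$. Fix a maximum $2$-packing $R$. Since the system is intersecting, any two lines of $R$ meet in exactly one point, and since $R$ is a $2$-packing these $\binom{|R|}{2}$ intersection points are pairwise distinct. Consequently a fixed line of $R$ carries $|R|-1$ distinct such points, so $|R|-1\leq r+1$ and $\nu_2(P^\prime,\mathcal{L}^\prime)\leq r+2$. Alternatively, Lemma \ref{lemma:impar} applied to the even-uniform $(P^*,\mathcal{L}^*)$ excludes $\nu_2(P^*,\mathcal{L}^*)=r+3$, since that value would force $\tau(P^*,\mathcal{L}^*)=(r+4)/2$, contradicting $\tau(P^*,\mathcal{L}^*)=r+1$ for $r\geq4$.

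For the lower bound I want $\nu_2(P^\prime,\mathcal{L}^\prime)\geq r+2$, equivalently $\tau(P^\prime,\mathcal{L}^\prime)\leq\nu_2(P^\prime,\mathcal{L}^\prime)-1$. The structural lemmas pin down the geometry: by Lemma \ref{lemma:gradomaximo} there is a point $p$ with $\deg(p)=\Delta(P^\prime,\mathcal{L}^\prime)=r+1$, and since these $r+1$ lines through $p$ meet only at $p$ they already exhaust the $|P^\prime|=r^2+r+1$ points of Lemma \ref{lemma:desigualdadesplanoproy}; thus every other line is a transversal of the pencil, meeting each $m_i$ in one point off $p$. When the family is small enough that $|\mathcal{L}^\prime|\leq\deg(p)+\deg(q)+\nu_2(P^\prime,\mathcal{L}^\prime)-3$, with $q$ a point of second-largest degree, Theorem \ref{teo:segundopaper} delivers $\tau(P^\prime,\mathcal{L}^\prime)\leq\nu_2(P^\prime,\mathcal{L}^\prime)-1$ at once. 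The remaining, dense regime is the crux: assuming for contradiction a maximum $2$-packing $R$ of $(P^*,\mathcal{L}^*)$ with $|R|\leq r+1$, I would adapt the pairing argument from the proof of Lemma \ref{lemma:impar}. Any line outside $R$ meets all lines of $R$, and since $R$ is a $2$-packing its intersection points with them must pair up among the $r+2$ points of that line; played against the even line-size $r+2$, the parity of this pairing should exclude $|R|\leq r+1$ and force $\nu_2(P^\prime,\mathcal{L}^\prime)=r+2$.

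The main obstacle is precisely this exclusion of $\nu_2(P^\prime,\mathcal{L}^\prime)=r+1$ in the dense case. It cannot be reached through Theorem \ref{teo:segundopaper} alone, since for a full projective plane of order $r$ the cardinality hypothesis already fails, and for \emph{odd} order one genuinely has $\tau=\nu_2$ with no gap; so any correct argument must use the evenness of $r$ in an essential, parity-flavoured way, exactly as Lemma \ref{lemma:impar} does. Making that count rigorous — controlling how the intersection points of an external line distribute over the pencil classes $C_i=m_i\setminus\{p\}$ and extracting a clean parity contradiction when $|R|\leq r+1$ — is the step I expect to demand the most care.
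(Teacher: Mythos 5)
Your upper bound is exactly the paper's: the $|R|-1$ pairwise-distinct intersection points carried by a fixed line of a $2$-packing $R$ give $\nu_2(P^\prime,\mathcal{L}^\prime)\le r+2$, and the final identity does follow at once from $\tau(P^\prime,\mathcal{L}^\prime)=r+1$ (Lemma~\ref{lema:igualdades}); your reduction $\nu_2(P^*,\mathcal{L}^*)=\nu_2(P^\prime,\mathcal{L}^\prime)$ via the degree-one points is also sound. The genuine gap is the lower bound in what you call the dense regime. You correctly observe that Theorem~\ref{teo:segundopaper} applies to $(P^\prime,\mathcal{L}^\prime)$ itself only when $|\mathcal{L}^\prime|\le\Delta+\Delta^\prime+\nu_2-3\le 3r+1$, but for larger $|\mathcal{L}^\prime|$ you only gesture at a parity argument modelled on Lemma~\ref{lemma:impar} and explicitly leave it unexecuted. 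That is precisely the step the lemma needs, so the proposal does not prove $\nu_2(P^\prime,\mathcal{L}^\prime)\ge r+2$. Moreover, the sketch as described is unlikely to close: the pigeonhole in Lemma~\ref{lemma:impar} works only because a $2$-packing of size equal to the uniformity plus one forces every line of $R$ to consist \emph{entirely} of its intersection points with the other members of $R$, leaving an external line nowhere new to meet it. When $|R|\le r+1$ in the $(r+2)$-uniform system $(P^*,\mathcal{L}^*)$, each line of $R$ still has uncovered points, an external line can meet it there, and no local parity contradiction on a single external line is available; any correct argument must bring in the global structure, not just a pairing count.

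The paper closes the dense case by a different, non-parity device that you did not find: since $|\mathcal{L}^\prime|\ge 3r$ by Lemma~\ref{lemma:desigualdadesplanoproy}, it passes to a spanning linear subsystem $(\hat P,\hat{\mathcal{L}})$ of $(P^\prime,\mathcal{L}^\prime)$ with exactly $3r$ lines, applies Theorem~\ref{teo:segundopaper} to that sparse subsystem to get $\nu_2(\hat P,\hat{\mathcal{L}})=r+2$, and then transfers the bound upward via the monotonicity $\nu_2(\hat P,\hat{\mathcal{L}})\le\nu_2(P^\prime,\mathcal{L}^\prime)$, which together with your upper bound forces equality. (The paper is itself terse about verifying the degree and $\nu_2$ hypotheses of Theorem~\ref{teo:segundopaper} for the subsystem, but that is the intended mechanism.) To complete your proof you should either adopt this subsystem-plus-monotonicity reduction or actually carry out a global counting argument; as written, the crux of the lower bound is missing.
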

\begin{proof}Since $(P^\prime,\mathcal{L}^\prime)$ is an intersecting $(r+1)$-uniform linear system, then $|l^\prime|\geq\nu_2(P^\prime,\mathcal{L}^\prime)-1$, for any line $l^\prime\in\mathcal{L}^\prime$. Hence, $\nu_2(P^\prime,\mathcal{L}^\prime)\leq r+2$. If $r+2$ is odd integer, then by Lemma \ref{lemma:impar} it satisfies $\tau(P^\prime,\mathcal{L}^\prime)=\frac{r+3}{2}$, which is a contradiction, since $\tau(P^\prime,\mathcal{L}^\prime)=r+1$.
	
Let $p\in P^\prime$ such that $deg(p)=\Delta(P^\prime,\mathcal{L}^\prime)$, and let $\Delta^\prime(P^\prime,\mathcal{L}^\prime)=\max\{deg(x): x\in P^\prime\setminus\{p\}\}$. By Theorem \ref{teo:segundopaper} if $|\mathcal{L}^\prime|\leq\Delta(P^\prime,\mathcal{L}^\prime)+\Delta^\prime(P^\prime,\mathcal{L}^\prime)+\nu_2(P,\mathcal{L})-3\leq3r+1$ (see Lemma \ref{lemma:gradomaximo}) then $\tau(P^\prime,\mathcal{L}^\prime)\leq\nu_2(P^\prime,\mathcal{L}^\prime)-1$, which implies $\nu_2(P^\prime,\mathcal{L}^\prime)\geq r+2$, and the equality $\nu_2(P^\prime,\mathcal{L}^\prime)=r+2$ holds. Hence, by Lemma \ref{lemma:desigualdadesplanoproy}, if $|\mathcal{L}^\prime|=3r$ then $\nu_2(P^\prime,\mathcal{L}^\prime)=r+2$, which implies if $(P,\mathcal{L})\in\mathcal{I}_{r+2}$ then $\nu_2(P^\prime,\mathcal{L}^\prime)=r+2$, since if $(\hat{P},\mathcal{\hat{L}})$ is a spanning linear subsystem of $(P^\prime,\mathcal{L}^\prime)$ then $\nu_2(\hat{P},\mathcal{\hat{L}})\leq\nu_2(P^\prime,\mathcal{L}^\prime)$.
\end{proof}

\begin{theorem}\label{thm:main}
Let $q$ be an even prime power. For every $(P,\mathcal{L})\in\mathcal{I}_{q+2}$ the linear system $(P^\prime,\mathcal{L}^\prime)$ is a spanning $(q+1)$-uniform linear subsystem of $\Pi_q$ such that $\tau(P^\prime,\mathcal{L}^\prime)=\nu_2(P^\prime,\mathcal{L}^\prime)-1=q+1$ with $|\mathcal{L}^\prime|\geq3q$.	
\end{theorem}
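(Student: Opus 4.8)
The plan is to split the statement into a parametric part, which comes almost verbatim from the preceding lemmas, and a structural part, the embedding of $(P',\mathcal L')$ into $\Pi_q$, where the real work lies. First I would harvest the parameters. By construction $(P',\mathcal L')$ is a $(q+1)$-uniform intersecting linear system, and since $(P,\mathcal L)\in\mathcal I_{q+2}$, Lemma~\ref{lema:igualdades} gives $\tau(P',\mathcal L')=(q+2)-1=q+1$ while Lemma~\ref{lemma:gradomaximo} gives $\Delta(P',\mathcal L')=q+1$. Using that $q$ is even (this is the only place parity is needed, and it matches the fact recorded earlier that $\tau(\Pi_q)=\nu_2(\Pi_q)-1$ precisely for even $q$), I apply Lemma~\ref{lemma:igualdad} with the even integer $r=q$ to get $\nu_2(P',\mathcal L')=q+2$, whence $\tau(P',\mathcal L')=\nu_2(P',\mathcal L')-1=q+1$. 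Finally Lemma~\ref{lemma:desigualdadesplanoproy} supplies $|P'|=q^2+q+1$, the bound $|\mathcal L'|\ge 3(q+2-2)=3q$, and $\gamma(P',\mathcal L')=1$. Since $q^2+q+1$ is exactly the number of points of $\Pi_q$, the word \emph{spanning} is automatic once an embedding $(P',\mathcal L')\hookrightarrow\Pi_q$ is produced, so everything reduces to constructing that embedding.

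Next I would read off the pencil structure forced by $\gamma(P',\mathcal L')=1$. Fix a point $p$ adjacent to every other point; each line through $p$ covers only $q$ further points, so dominating the remaining $q^2+q$ points requires at least $q+1$ lines through $p$, and as $\Delta(P',\mathcal L')=q+1$ the degree of $p$ is exactly $q+1$, its lines $m_0,\dots,m_q$ covering all $q^2+q+1$ points. By linearity no point other than $p$ lies on two of these, so the sets $B_i=m_i\setminus\{p\}$ partition $P'\setminus\{p\}$ into $q+1$ classes of size $q$. Any line $l\in\mathcal L'$ not through $p$ meets each $m_i$ in a single point distinct from $p$, hence is a transversal of $\{B_0,\dots,B_q\}$, and the intersecting hypothesis forces any two such transversals to agree in exactly one class. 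This is precisely the local picture of $\Pi_q$ viewed from a point: a point $P_\infty$, its pencil of $q+1$ lines, the $q+1$ classes of $q$ points they determine, and the remaining lines meeting each class once.

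The embedding itself is the crux and the step I expect to be hardest. The natural attempt is to match $p$ with a point $P_\infty$ of $\Pi_q$, its pencil with the pencil through $P_\infty$, and each class $B_i$ bijectively with the corresponding class of $\Pi_q$, and then to choose these bijections so that \emph{every} transversal of $\mathcal L'$ becomes a line of $\Pi_q$ simultaneously; producing one coherent system of bijections is exactly the obstacle, since an arbitrary family of pairwise exactly-meeting transversals need not be affine. I would attack it by completion: adjoin the lines needed to join each not-yet-collinear pair of points and argue that, because $(P',\mathcal L')$ is $(q+1)$-uniform, exactly-intersecting, and already carries $|P'|=q^2+q+1$ points, the completion is forced to be a projective plane of order $q$, i.e. $\Pi_q$, inside which $(P',\mathcal L')$ sits as a spanning $(q+1)$-uniform subsystem with the asserted invariants $\tau=\nu_2-1=q+1$ and $|\mathcal L'|\ge 3q$. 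The delicate point is to show that the completion does not stall at a non-extendable partial plane and that its order is exactly $q$; here I would lean on the rigidity provided by the fixed pencil through $p$, the equality $\tau=\nu_2-1=q+1$, the abundance of lines guaranteed by $|\mathcal L'|\ge 3q$, and, if required, the existence and structure of $\Pi_q$ coming from $q$ being a prime power.
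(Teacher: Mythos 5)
Your parametric bookkeeping is correct and coincides with the paper's: Lemma~\ref{lema:igualdades} gives $\tau(P',\mathcal{L}')=q+1$, Lemma~\ref{lemma:gradomaximo} gives $\Delta(P',\mathcal{L}')=q+1$, Lemma~\ref{lemma:igualdad} applied with the even integer $r=q$ gives $\nu_2(P',\mathcal{L}')=q+2$, and Lemma~\ref{lemma:desigualdadesplanoproy} gives $|P'|=q^2+q+1$, $|\mathcal{L}'|\ge 3q$ and $\gamma(P',\mathcal{L}')=1$; your pencil analysis at a dominating point $p$ is also sound. But the actual content of the theorem is the assertion that $(P',\mathcal{L}')$ embeds in $\Pi_q$ as a spanning subsystem, and that is precisely the step you do not carry out: you propose to complete $(P',\mathcal{L}')$ by adjoining lines through non-collinear pairs and then concede that you cannot show the completion avoids stalling or that its order is $q$. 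This is a genuine gap, not a finishing touch. Partial linear spaces with admissible parameters can be maximal yet non-embeddable in any projective plane, so \emph{adjoin lines until every pair is joined} requires a concrete mechanism to rule out getting stuck, and the range $3q\le|\mathcal{L}'|<q^2+q+1$ is exactly where such an argument is nontrivial; none is supplied.

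The paper takes a different (and much shorter, if terse) route at this crux: it does not complete anything. It invokes the consequence of Lemma~\ref{lemma:desigualdadesplanoproy} that when $|\mathcal{L}'|=q^2+q+1$ every point of $(P',\mathcal{L}')$ has degree $q+1$, so that $(P',\mathcal{L}')$ is a $(q+1)$-uniform, $(q+1)$-regular intersecting linear system on $q^2+q+1$ points and is therefore itself a projective plane of order $q$; the evenness of $q$ enters by identifying $\nu_2(P',\mathcal{L}')=q+2$ with the size of a (hyper)oval in the dual, consistent with $\tau(\Pi_q)=\nu_2(\Pi_q)-1=q+1$ for even $q$. The general case is then read off as a spanning subsystem of that plane. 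If you want to salvage your completion strategy, you would need to either reduce to this extremal regular case or import an embedding theorem for the dual linear space of $(P',\mathcal{L}')$; as written, your proof establishes the numerical claims but not the structural one.
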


\begin{proof}
	Let $(P,\mathcal{L})\in\mathcal{I}_{q+2}$. Then $(P^\prime,\mathcal{L}^\prime)$ is an $(q+1)$-uniform intersecting linear system with $|P^\prime|=q^2+q+1$ (by Lemma \ref{lemma:desigualdadesplanoproy}
	). Furthermore, if $|\mathcal{L}^\prime|=q^2+q+1$ then all points of $(P^\prime,\mathcal{L}^\prime)$ have degree $q+1$ (it is a consequence of Lemma \ref{lemma:desigualdadesplanoproy}, see \cite{Dong}). Since projective planes are dual systems, the 2-packing number coincides with the cardinality of an oval, which is the maximum number of points in general position (no three of them collinear), and it is equal to $q+2=\nu_2(P^\prime,\mathcal{L}^\prime)$ (Lemma \ref{lemma:igualdad}), when $q$ is even, see for example \cite{B95}. Hence, the linear system $(P^\prime,\mathcal{L}^\prime)$ is a projective plane of order $q$, $\Pi_q$. Therefore, if $(P,\mathcal{L})\in\mathcal{I}_{q+2}$ then $(P^\prime,\mathcal{L}^\prime)$ is a spanning linear subsystem of $\Pi_q$ satisfying $\tau(P^\prime,\mathcal{L}^\prime)=\nu_2(P^\prime,\mathcal{L}^\prime)-1=q+1$, with $|\mathcal{L}^\prime|\geq3q$ (see Lemma \ref{lemma:desigualdadesplanoproy}). 
\end{proof} 

The following is a straightforward consequence of Theorem \ref{thm:main} which is the main result given in \cite{Dong}.

\begin{corollary}
If $(P,\mathcal{L})\in\mathcal{I}_{4}$ then either $(P^\prime,\mathcal{L}^\prime)$ is the Fano plane, $\Pi_2$, or $(P^\prime,\mathcal{L}^\prime)$ is obtained from $\Pi_2$ by deleting any line. 
\end{corollary}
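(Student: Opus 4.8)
The plan is to specialize Theorem~\ref{thm:main} to the smallest even prime power, $q=2$, for which $q+2=4$, so that the hypothesis $(P,\mathcal{L})\in\mathcal{I}_{q+2}$ becomes exactly $(P,\mathcal{L})\in\mathcal{I}_{4}$. Theorem~\ref{thm:main} then hands me directly the structure I need: $(P^\prime,\mathcal{L}^\prime)$ is a spanning $(q+1)$-uniform, i.e. $3$-uniform, linear subsystem of $\Pi_2$ (the Fano plane), with $|\mathcal{L}^\prime|\geq 3q=6$. All that remains is to enumerate the finitely many subsystems of $\Pi_2$ compatible with these constraints.

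First I would pin down the ambient parameters. By Lemma~\ref{lemma:desigualdadesplanoproy} with $r=4$ one has $|P^\prime|=(r-1)^2-(r-1)+1=7$ and $|\mathcal{L}^\prime|\leq 7$; together with the lower bound $|\mathcal{L}^\prime|\geq 6$ this forces $|\mathcal{L}^\prime|\in\{6,7\}$. Since $\Pi_2$ has exactly $7$ points and $7$ lines, each of size $3$, and since $(P^\prime,\mathcal{L}^\prime)$ is spanning (so $P^\prime$ is the full point set of $\Pi_2$), every $l^\prime\in\mathcal{L}^\prime$ satisfies $l^\prime=l\cap P^\prime=l$ for the line $l$ of $\Pi_2$ from which it is induced. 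Hence $\mathcal{L}^\prime$ is simply a subfamily of the seven lines of $\Pi_2$ of cardinality $6$ or $7$, and the two cases read off immediately: if $|\mathcal{L}^\prime|=7$ then $\mathcal{L}^\prime$ consists of all seven lines and $(P^\prime,\mathcal{L}^\prime)=\Pi_2$; if $|\mathcal{L}^\prime|=6$ then $\mathcal{L}^\prime$ omits exactly one line, i.e. $(P^\prime,\mathcal{L}^\prime)$ is obtained from $\Pi_2$ by deleting a single line. I would add a one-line check that the latter system is still spanning, since in $\Pi_2$ every point lies on three lines and therefore still lies on at least two of the six retained ones, so no point is lost; and I would note that because the automorphism group of $\Pi_2$ is transitive on lines, ``deleting any line'' yields a single isomorphism type, matching the statement.

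Since the argument reduces to a finite case check, I do not expect a genuine obstacle. The one point needing care is the identification $l^\prime=l$, which is exactly what upgrades ``subsystem of $\Pi_2$'' to ``obtained by deleting lines of $\Pi_2$'': it uses both the spanning property $P^\prime=P(\Pi_2)$ and that the $3$-uniformity of $\mathcal{L}^\prime$ agrees with $|l|=q+1=3$ for lines of $\Pi_2$. Absent this, a line of the subsystem could in principle be a proper subset of a Fano line, and the clean ``delete a line'' description would fail; making that identification explicit is the only place where the proof does any real work beyond quoting Theorem~\ref{thm:main} and Lemma~\ref{lemma:desigualdadesplanoproy}.
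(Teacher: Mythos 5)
Your proposal is correct and follows exactly the route the paper intends: the paper gives no written proof, stating only that the corollary is a straightforward consequence of Theorem~\ref{thm:main}, and your specialization to $q=2$ combined with the bounds $6\leq|\mathcal{L}^\prime|\leq 7$ from Lemma~\ref{lemma:desigualdadesplanoproy} is precisely that intended deduction. Your explicit identification $l^\prime=l$ via the spanning and $3$-uniformity conditions is a worthwhile detail the paper leaves implicit.
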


{\bf Acknowledgment}

The author would like to thank the referees for many constructive suggestions to improve this paper.

Research was partially supported by SNI and CONACyT.


\begin{thebibliography}{10}
\bibitem{Acharya}	
B.D. Acharya, \emph{Domination in hypergraphs }, AKCE J. Comb. {\bf 4} (2007), 117--126.

\bibitem{Acharya2}	
B.D. Acharya, \emph{Domination in hypergraphs II. New directions}, Proc. Int. Conf.-ICDM 2008, Mysore, India, pp. 1--16.
	
\bibitem{CGCA}	
C. Alfaro, G. Araujo-Pardo, C. Rubio-Montiel and A. V{\' a}zquez-{\' A}vila, \emph{On transversal and 2-packing number in uniform linear systems}, Submitted.
	
\bibitem{CCA}	
C. Alfaro, C. Rubio-Montiel and A. V{\' a}zquez-{\' A}vila, \emph{Covering and 2-packing number in graphs}, https://arxiv.org/abs/1707.02254.

\bibitem{CA}	
C. Alfaro and A. V{\' a}zquez-{\' A}vila, \emph{On a problem of Henning and Yeo about the transversal numbers of uniform linear systems}, https://arxiv.org/abs/1710.02501
	
\bibitem{MR3727901}	
G. Araujo-Pardo, A. Montejano, L. Montejano and A. V{\' a}zquez-{\' A}vila, \emph{On transversal and $2$-packing numbers in straight line systems on $\mathbb{R}^{2}$}, Util. Math. {\bf 105} (2017), 317--336.
	
\bibitem{Arumugam}	
S. Arumugam, B. Jose, Cs. Bujtás and Zs. Tuza, \emph{Equality of domination and transversal numbers in hypergraphs}, Discrete Appl. Math. {\bf 161} (2013), 1859--1867.	

\bibitem{B86}	
L. M. Batten, \emph{Combinatorics of Finite Geometries}, Cambridge Univ Press, Cambridge, 1986.

\bibitem{B95}	
F. Buekenhout, \emph{Handbook of Incidence Geometry: Buildings and Foundations}, Elsevier, 1995.

\bibitem{Buj}	
Cs. Bujtás, M.A. Henning and Zs. Tuza, \emph{Transversals of domination in uniform hypergraphs}, Discrete Appl. Math. {\bf 161} (2013), 1859--1867.


\bibitem{Dong}	
Y. Dong, E. Shan, S. Li and L. Kang, \emph{Domination in intersecting hypergraphs}, Discrete Appl. Math. (2018), https://doi.org/10.1016/j.dam.2018.05.039.


\bibitem{Jose}	
B.K. Jose and Zs. Tuza, \emph{Hypergraph domination and strong independence}, Appl. Anal. Discrete Math. {\bf 3} (2009), 237--358.


\bibitem{Kang}	
L. Kang, S. Li, Y. Dong, E. Shan, \emph{Matching and domination numbers in r-uniform hypergraphs}, J. Comb. Optim. {\bf 34} (2017), 656--659.

\bibitem{Shan}	
E. Shan, Y. Dong, L. Kang and S. Li, \emph{Extremal hypergraphs for matching number and domination number}, Discrete Appl. Math. {\bf 236} (2018), 415--421.
	
\bibitem{Avila}	
A. V{\' a}zquez-{\' A}vila, \emph{Domination and 2-packing number in graphs}, https://arxiv.org/abs/1707.01547
	
\bibitem{Avila2}	
A. V{\' a}zquez-{\' A}vila, \emph{On domination and 2-packing numbers in intersecting linear systems}, Submitted.




	
	
	
	
	
	
	
	
	
	
	
	
	
	
	
	
	
	
	
	
	
	
	
	
	
	
	
	
	
	
	
	
		
\end{thebibliography}
\end{document}